\newtheorem*{theorem*}{Theorem}
\newtheorem*{proposition*}{Proposition}
\newtheorem*{lemma*}{Lemma}
\theoremstyle{definition} }
\def\hpic #1 #2 {\mbox{$\begin{array}[c]{l} \epsfig{file=#1,height=#2}
\end{array}$}}
\def\vpic #1 #2 {\mbox{$\begin{array}[c]{l} \epsfig{file=#1,width=#2}
\end{array}$}}
\newcommand  {\rmn}\romannumeral
\begin{document}
\title[On the Alexander theorem for the oriented Thompson group $\vec F$]{On the Alexander theorem for the oriented Thompson group $\vec F$}
\author{Valeriano Aiello} 
\address{Valeriano Aiello,
Section de Math\'  ematiques, Universit\' e de Gen\` eve, 2-4 rue du Li\` evre, Case Postale 64,
1211 Gen\` eve 4, Switzerland
}\email{valerianoaiello@gmail.com}

\begin{abstract}
In \cite{Jo14} and \cite{Jo18} Vaughan Jones introduced a construction which yields oriented knots and links from elements of the oriented Thompson group $\vec{F}$.
In this paper we prove, by analogy with Alexander's
classical theorem establishing that every knot or link can be represented as a closed braid, that given an oriented knot/link $\vec{L}$, there exists an element $g$ in $\vec{F}$ whose \emph{closure}   $\vec{\mathcal{L}}(g)$ is $\vec L$.   
\end{abstract}
\maketitle
 
\section{Introduction}
Recently, Vaughan Jones has introduced a method to construct unitary representations of the Thompson groups (and more in general of groups of fractions of certain categories) \cite{Jo14, Jo16}, many of which arise from Planar Algebras \cite{jo2}.
A connection between un-oriented knots/links and the Thompson groups $F$ and $T$ was also established 
by showing that, given any element of these groups, one may construct a knot/link
(see \cite{Jo18} for a review of the recent development in this field, and also \cite{ACJ, AiCo1, AiCo2}). 
Moreover, it was shown that the elements of the oriented subgroups $\vec{F}$ and $\vec{T}$ give rise to oriented knots/links. 
Within this framework Jones proved   a result analogous to the Alexander Theorem for   braids and links \cite{Alex}. 
In particular, he showed that given any unoriented knot/link $L$ there exists an element $g$ in $F$ whose associated link is  equal to $L$. For the oriented case a similar (but slightly weaker) result was proven. In fact, the knot/link was shown to be reproduced only up to disjoint union with unlinked unknots 
\cite[Theorem 5.3.15]{Jo14}. In this paper we show that  actually the oriented knot/link can be exactly reproduced. This answers a question posed by Jones in \cite[Section 7, Question (2)]{Jo18} and reinforces the idea that the oriented subgroup $\vec{F}$ can be considered as an alternative to the braid groups. 

The paper is organised in two sections. In the first one we recall the definitions of the Thompson group $F$ and of the oriented subgroup $\vec{F}$, we then describe the construction of oriented knots/links from elements of $\vec F$ providing an example. In the second section, after recalling the definition of the semi-dual graph, we state and prove the main result of this paper.

\section{Preliminaries on the Thompson groups and Jones' construction of knots and links}
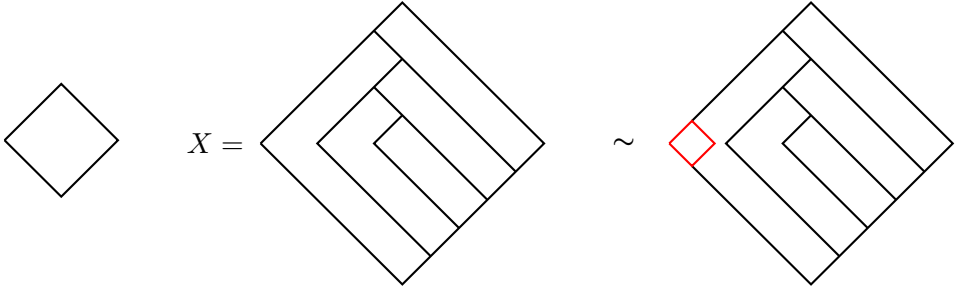
\begin{figure}
\caption{A pair of opposing carets and two equivalent elements in $F$.}\label{AAA}
\phantom{This text will be invisible} 
\[\begin{tikzpicture}[x=1.5cm, y=1.5cm,
    every edge/.style={
        draw,
      postaction={decorate,
                    decoration={markings}
                   }
        }
]

\draw[thick] (0,0)--(.5,.5)--(1,0)--(.5,-.5)--(0,0);
\node at (0,-1.2) {$\;$};
\end{tikzpicture}
\qquad 
\begin{tikzpicture}[x=1.5cm, y=1.5cm,
    every edge/.style={
        draw,
      postaction={decorate,
                    decoration={markings}
                   }
        }
]

\node at (-.4,0) {$\scalebox{1}{$X=$}$};

\draw[thick]  (0,0)--(1.25,1.25)--(2.5,0)--(1.25,-1.25)--(0,0);
\draw[thick]  (1,1)--(2.25,.-.25);
\draw[thick]  (1,.5)--(2,-.5);
\draw[thick]  (1.25,.75)--(.5,0)--(1.5,-1);
\draw[thick]  (1.75,-.75)--(1,0)--(1.25,.25);


\end{tikzpicture}
\qquad 
\begin{tikzpicture}[x=1.5cm, y=1.5cm,
    every edge/.style={
        draw,
      postaction={decorate,
                    decoration={markings}
                   }
        }
]

\node at (-.4,0) {$\scalebox{1}{$\thicksim$}$};

\draw[thick]  (0.2,0.2)--(1.25,1.25)--(2.5,0)--(1.25,-1.25)--(0.2,-0.2);
\draw[thick]  (1,1)--(2.25,.-.25);
\draw[thick]  (1,.5)--(2,-.5);
\draw[thick]  (1.25,.75)--(.5,0)--(1.5,-1);
\draw[thick]  (1.75,-.75)--(1,0)--(1.25,.25);
\draw[thick, red] (0,0)--(.2,.2)--(.4,0)--(.2,-.2)--(0,0);

\end{tikzpicture}
\]
\end{figure}
There are many   equivalent descriptions of R. Thompson's group $F$. The one 
 that is most appropriate for our work in this
paper uses tree diagrams. 
An element of $F$ is given by a pair of rooted, planar, binary trees $(T_+,T_-)$ with the same number of leaves. As usual, we draw a pair of trees in the plane with one tree upside down on top of the other.
Two pairs of trees are equivalent if they differ by a pair of opposing carets, see the Figure \ref{AAA}. Thanks to this equivalence relation, the following rule defines the multiplication in $F$: $(T_+,T)\cdot (T,T_-):=(T_+,T_-)$. The trivial element is represented by any pair $(T,T)$ and the inverse of $(T_+,T_-)$ is just $(T_-,T_+)$. 
We refer to \cite{CFP} and \cite{B} for further details and more information on the Thompson group $F$.

Given an element $(T_+,T_-)$ of the Thompson group $F$ one may construct the graph $\Gamma(T_+,T_-)$,  \cite[Section 4.1]{Jo14}. We describe the procedure with an example. We pick the element $X$ in Figure \ref{AAA}. 
We place the leaves of $(T_+,T_-)$ on the half-integers, 
i.e. they are $(1/2,0)$, $(3/2,0)$, $(5/2,0)$, $(7/2,0)$, $(9/2,0)$, $(11/2,0)$. The vertices of $\Gamma(T_+,T_-)$  are $(0,0)$, $(1,0)$, $(2,0)$, $(3,0)$, $(4,0)$, $(5,0)$. 
Now the edges of $\Gamma(T_+,T_-)$ pass transversally through the edges of the top tree sloping up from left to right (herein we refer to them by the West-North edges, or just WN$=$\rotatebox[origin=tr]{-45}{|})
and the edges of the bottom tree sloping down from left to right (we call them the West-South edges or simply  WS$=$\rotatebox[origin=tr]{45}{|}).
See Figure \ref{B2} for the  $\Gamma$-graph of the element $X$ depicted in Figure \ref{AAA}.  \\
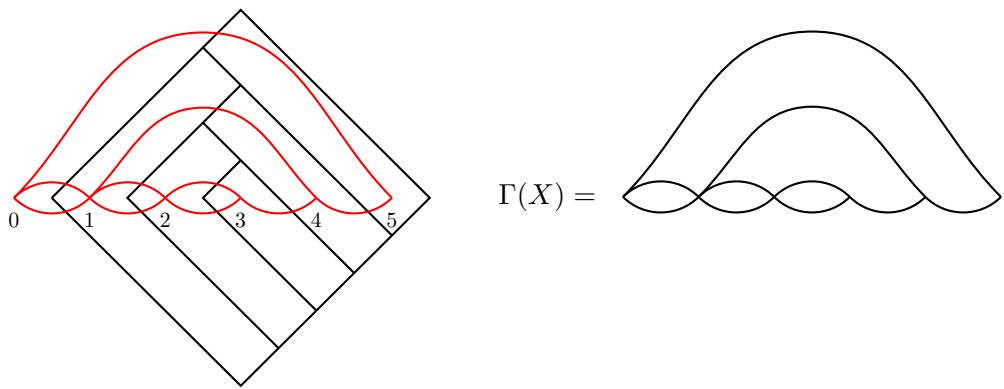
\begin{figure}
\caption{An example of a $\Gamma$-graph.}\label{B2}
\[\begin{tikzpicture}[x=2cm, y=2cm,
    every edge/.style={
        draw,
      postaction={decorate,
                    decoration={markings}
                   }
        }
]

\node at (-.25,-.15) {$\scalebox{.75}{$0$}$};
\node at (.25,-.15) {$\scalebox{.75}{$1$}$};
\node at (.75,-.15) {$\scalebox{.75}{$2$}$};
\node at (1.25,-.15) {$\scalebox{.75}{$3$}$};
\node at (1.75,-.15) {$\scalebox{.75}{$4$}$};
\node at (2.25,-.15) {$\scalebox{.75}{$5$}$};

\draw[thick]  (0,0)--(1.25,1.25)--(2.5,0)--(1.25,-1.25)--(0,0);
\draw[thick]  (1,1)--(2.25,.-.25);
\draw[thick]  (1,.5)--(2,-.5);
\draw[thick]  (1.25,.75)--(.5,0)--(1.5,-1);
\draw[thick]  (1.75,-.75)--(1,0)--(1.25,.25);

\draw[thick, red] (-.25,0) to[out=45,in=135] (.25,0);

\draw[thick, red] (-.25,0) to[out=45,in=180] (1,1.1);
\draw[thick, red] (1,1.1) to[out=0,in=135] (2.25,0);

\draw[thick, red] (.25,0) to[out=45,in=135] (.75,0);
\draw[thick, red] (.25,0) to[out=45,in=180] (1,.6);
\draw[thick, red] (1,.6) to[out=0,in=135] (1.75,0);

\draw[thick, red] (.75,0) to[out=45,in=135] (1.25,0);

\draw[thick, red] (-.25,0) to[out=-45,in=-135] (.25,0);
\draw[thick, red] (.25,0) to[out=-45,in=-135] (.75,0);
\draw[thick, red] (.75,0) to[out=-45,in=-135] (1.25,0);
\draw[thick, red] (1.25,0) to[out=-45,in=-135] (1.75,0);
\draw[thick, red] (1.75,0) to[out=-45,in=-135] (2.25,0);

\end{tikzpicture}
\qquad
\begin{tikzpicture}[x=2cm, y=2cm,
    every edge/.style={
        draw,
      postaction={decorate,
                    decoration={markings}
                   }
        }
]


\node at (-.75,0) {$\scalebox{1}{$\Gamma(X)=$}$};

\draw[thick] (-.25,0) to[out=45,in=135] (.25,0);

\draw[thick] (-.25,0) to[out=45,in=180] (1,1.1);
\draw[thick] (1,1.1) to[out=0,in=135] (2.25,0);

\draw[thick] (.25,0) to[out=45,in=135] (.75,0);
\draw[thick] (.25,0) to[out=45,in=180] (1,.6);
\draw[thick] (1,.6) to[out=0,in=135] (1.75,0);

\draw[thick] (.75,0) to[out=45,in=135] (1.25,0);

\draw[thick] (-.25,0) to[out=-45,in=-135] (.25,0);
\draw[thick] (.25,0) to[out=-45,in=-135] (.75,0);
\draw[thick] (.75,0) to[out=-45,in=-135] (1.25,0);
\draw[thick] (1.25,0) to[out=-45,in=-135] (1.75,0);
\draw[thick] (1.75,0) to[out=-45,in=-135] (2.25,0);

\node at (0,-1.2) {$\;$};

\end{tikzpicture}
\]
\end{figure}

%

We observe that $\Gamma(T_+,T_-)$  comes with a natural orientation \cite[Section 4.1]{Jo14}. Indeed, if an edge has vertices $(a,0)$ and $(b,0)$ with $a<b$, we call $(a,0)$ the source and $(b,0)$ the target. The edges may also be endowed with signs: $+$ for the edges in the upper-half plane and $-$ for the edges in the lower-half plane.
We point out that giving $\Gamma(T_+,T_-)$  is equivalent to giving the pair of trees $(T_+,T_-)$ as the latter may be reconstructed (for a proof see \cite[Lemma 4.1.4]{Jo14}).

We are 
now in a position to define the oriented subgroup $\vec{F}$ 
$$
\vec{F}:=\{(T_+,T_-)\in F \textrm{ s.t. $\Gamma(T_+,T_-)$ is $2$-colourable} \} \; ,
$$
where by being $2$-colourable we mean that it is possible to label the vertices of the graph with two colours such that whenever two vertices are connected by an edge, they have different colours. See \cite[Chapter V]{Bollobas} for more information about vertex colourings.
We denote the two colours by $+$ and $-$. Since the graph $\Gamma(T_+,T_-)$ is connected, if it is $2$-colourable, there are exactly two colourings. By convention we choose that in which the leftmost vertex has colour $+$.
We mention that this group was shown to be isomorphic to the Brown-Thompson group $F_3$   (\cite[Lemma 4.7]{Golan}, see also \cite[Theorem 5.5]{Ren} for an alternative proof).

In Jones' construction of knots and links, after taking an element of the Thompson group and producing the $\Gamma$-graph, we need  to make  three further steps. The first one is the construction of the medial graph $M(\Gamma(T_+,T_-))$ of $\Gamma(T_+,T_-)$. 
We recall from \cite[Chapter X]{Bollobas} that given a connected plane graph $G$, its medial graph $M(G)$ is obtained by putting a vertex on every edge of $G$ and by joining   two new vertices by an edge lying in a face of $G$ if the vertices are on adjacent edges of the face.
Since all the vertices of $M(\Gamma(T_+,T_-))$ are $4$-valent, in order to obtain a knot/link we just need a rule to turn the vertices into crossings. For the vertices in upper-half plane we use the crossing $\slashoverback$,
whereas for those in lower-half plane we use 
$\backoverslash$.
%
 For example, for the element $X$ given in Figure \ref{AAA} we get the graph and link diagram shown in Figure \ref{fig4} (a). \\
%
Now it is possible to shade the the link diagram in black and white (by convention the colour of the unbounded region is white) and obtain a surface in $\mathbb{R}^3$ whose boundary is the knot/link $\mathcal{L}(T_+,T_-)$, see \cite[Section 5.3.2]{Jo14}. By construction the vertices of the graph $\Gamma(T_+, T_-)$ are precisely in the black regions and each one has been assigned with a colour $+$ or $-$. One can easily see that, in our example, the colours of the vertices of $\Gamma(T_+,T_-)$ are $+$, $-$, $+$, $-$, $+$, $-$ (from left to right). These colours determine an orientation of the surface and of the boundary ($+$ means that the region is positively oriented). 
Therefore, we get the oriented link shown in Figure \ref{fig4} (b).
\begin{figure}
\caption{From the medial graph of a $\Gamma$-graph to an oriented link. }\label{fig4}
\phantom{This text will be invisible} 
\[
\phantom{This text} 
\begin{tikzpicture}[x=.6cm, y=.6cm, every path/.style={
 thick}, 
every node/.style={transform shape, knot crossing, inner sep=1.5pt}]
\node (aaaa) at (-4.75,1) {$\scalebox{1}{$(a)\;  M(\Gamma(T_+,T_-))=$}$}; 
%

\node (a1) at (0,0) {};
\node (a2) at (1,0) {};
\node (a3) at (2,0) {};
\node (a4) at (3,0) {};
\node (a5) at (4,0) {};

\node (b1) at (0,1) {};
\node (b2) at (1,1) {};
\node (b3) at (2,1) {};
\node (b4) at (3,2) {};
\node (b5) at (4,3) {};

\node (appo) at (0,3) {};

\draw (a1.center) .. controls (a1.4 north west) and (b1.4 south west) .. (b1.center);
\draw (a1.center) .. controls (a1.4 north east) and (b1.4 south east) .. (b1.center);

\draw (a2.center) .. controls (a2.4 north west) and (b2.4 south west) .. (b2.center);
\draw (a2.center) .. controls (a2.4 north east) and (b2.4 south east) .. (b2.center);

\draw (b1.center) .. controls (b1.8 north east) and (b4.8 north west) .. (b4.center);
\draw (a1.center) .. controls (a1.4 south east) and (a2.4 south west) .. (a2.center);

\draw (b1.center) .. controls (b1.16 north west) and (b5.4 south west) .. (b5.center);
\draw (b2.center) .. controls (b2.8 north west) and (b4.4 south west) .. (b4.center);

\draw (b2.center) .. controls (b2.4 north east) and (b3.4 north west) .. (b3.center);
\draw (a2.center) .. controls (a2.4 south east) and (a3.4 south west) .. (a3.center);
\draw (a3.center) .. controls (a3.4 north east) and (b3.4 south east) .. (b3.center);
\draw (a3.center) .. controls (a3.4 north west) and (b3.4 south west) .. (b3.center);
\draw (a1.center) .. controls (a1.16 south west) and (appo.16 west) .. (appo.center);
\draw (appo.center) .. controls (appo.4 east) and (b5.4 north west) .. (b5.center);

\draw (a3.center) .. controls (a3.4 south east) and (a4.4 south west) .. (a4.center);
\draw (a4.center) .. controls (a4.4 south east) and (a5.4 south west) .. (a5.center);
\draw (a4.center) .. controls (a4.4 north east) and (b4.4 south east) .. (b4.center);
\draw (b4.center) .. controls (b4.8 north east) and (a5.4 north west) .. (a5.center);
\draw (a5.center) .. controls (a5.4 north east) and (b5.4 south east) .. (b5.center);

\draw (a5.center) .. controls (a5.16 south east) and (b5.16 north east) .. (b5.center);
\draw (b3.center) .. controls (b3.4 north east) and (a4.4 north west) .. (a4.center);

\end{tikzpicture}
\qquad\qquad\qquad\qquad 
\begin{tikzpicture}[x=.6cm, y=.6cm, every path/.style={
 thick}, 
every node/.style={transform shape, knot crossing, inner sep=1.5pt}]

\node (aaaa) at (-4.5,1) {$\scalebox{1}{$\mathcal{L}(T_+,T_-)=$}$}; 

%

\node (a1) at (0,0) {};
\node (a2) at (1,0) {};
\node (a3) at (2,0) {};
\node (a4) at (3,0) {};
\node (a5) at (4,0) {};

\node (b1) at (0,1) {};
\node (b2) at (1,1) {};
\node (b3) at (2,1) {};
\node (b4) at (3,2) {};
\node (b5) at (4,3) {};

\node (appo) at (0,3) {};

\draw (a1.center) .. controls (a1.4 north west) and (b1.4 south west) .. (b1.center);
\draw (a1) .. controls (a1.4 north east) and (b1.4 south east) .. (b1);

\draw (a2.center) .. controls (a2.4 north west) and (b2.4 south west) .. (b2.center);
\draw (a2) .. controls (a2.4 north east) and (b2.4 south east) .. (b2);

\draw (b1.center) .. controls (b1.8 north east) and (b4.8 north west) .. (b4);
\draw (a1.center) .. controls (a1.4 south east) and (a2.4 south west) .. (a2);

\draw (b1) .. controls (b1.16 north west) and (b5.4 south west) .. (b5.center);
\draw (b2) .. controls (b2.8 north west) and (b4.4 south west) .. (b4.center);

\draw (b2.center) .. controls (b2.4 north east) and (b3.4 north west) .. (b3);
\draw (a2.center) .. controls (a2.4 south east) and (a3.4 south west) .. (a3);
\draw (a3) .. controls (a3.4 north east) and (b3.4 south east) .. (b3);
\draw (a3.center) .. controls (a3.4 north west) and (b3.4 south west) .. (b3.center);
\draw (a1) .. controls (a1.16 south west) and (appo.16 west) .. (appo.center);
\draw (appo.center) .. controls (appo.4 east) and (b5.4 north west) .. (b5);

\draw (a3.center) .. controls (a3.4 south east) and (a4.4 south west) .. (a4);
\draw (a4.center) .. controls (a4.4 south east) and (a5.4 south west) .. (a5);
\draw (a4) .. controls (a4.4 north east) and (b4.4 south east) .. (b4);
\draw (b4.center) .. controls (b4.8 north east) and (a5.4 north west) .. (a5.center);
\draw (a5) .. controls (a5.4 north east) and (b5.4 south east) .. (b5);

\draw (a5.center) .. controls (a5.16 south east) and (b5.16 north east) .. (b5.center);
\draw (b3.center) .. controls (b3.4 north east) and (a4.4 north west) .. (a4.center);

\end{tikzpicture}
\]
\[
\begin{tikzpicture}[x=.6cm, y=.6cm, every path/.style={
 thick}, 
every node/.style={transform shape, knot crossing, inner sep=1.5pt}]

\node (aaaa) at (-4.75,1) {$\scalebox{1}{$(b)\;  \vec{\mathcal{L}}(T_+,T_-)=$}$}; 

%

\node (a1) at (0,0) {};
\node (a2) at (1,0) {};
\node (a3) at (2,0) {};
\node (a4) at (3,0) {};
\node (a5) at (4,0) {};

\node (b1) at (0,1) {};
\node (b2) at (1,1) {};
\node (b3) at (2,1) {};
\node (b4) at (3,2) {};
\node (b5) at (4,3) {};

\node (appo) at (0,3) {};

\draw[->] (a1.center) .. controls (a1.4 north west) and (b1.4 south west) .. (b1.center);
\draw[->]  (a1) .. controls (a1.4 north east) and (b1.4 south east) .. (b1);

\draw[<-]  (a2.center) .. controls (a2.4 north west) and (b2.4 south west) .. (b2.center);
\draw[<-]  (a2) .. controls (a2.4 north east) and (b2.4 south east) .. (b2);

\draw[->]  (b1.center) .. controls (b1.8 north east) and (b4.8 north west) .. (b4);
\draw[<-]  (a1.center) .. controls (a1.4 south east) and (a2.4 south west) .. (a2);

\draw[->]  (b1) .. controls (b1.16 north west) and (b5.4 south west) .. (b5.center);
\draw[<-]  (b2) .. controls (b2.8 north west) and (b4.4 south west) .. (b4.center);

\draw (b2.center) .. controls (b2.4 north east) and (b3.4 north west) .. (b3);
\draw[->]  (a2.center) .. controls (a2.4 south east) and (a3.4 south west) .. (a3);
\draw[->]  (a3) .. controls (a3.4 north east) and (b3.4 south east) .. (b3);
\draw[->]  (a3.center) .. controls (a3.4 north west) and (b3.4 south west) .. (b3.center);
\draw[<-]  (a1) .. controls (a1.16 south west) and (appo.16 west) .. (appo.center);
\draw[<-] (appo.center) .. controls (appo.4 east) and (b5.4 north west) .. (b5);

\draw[<-] (a3.center) .. controls (a3.4 south east) and (a4.4 south west) .. (a4);
\draw[->] (a4.center) .. controls (a4.4 south east) and (a5.4 south west) .. (a5);
\draw (a4) .. controls (a4.4 north east) and (b4.4 south east) .. (b4);
\draw[<-] (b4.center) .. controls (b4.8 north east) and (a5.4 north west) .. (a5.center);
\draw[->] (a5) .. controls (a5.4 north east) and (b5.4 south east) .. (b5);

\draw[<-] (a5.center) .. controls (a5.16 south east) and (b5.16 north east) .. (b5.center);
\draw[->] (b3.center) .. controls (b3.4 north east) and (a4.4 north west) .. (a4.center);

\node (x1) at (-.5,.5) {$\scalebox{0.75}{$+$}$};
\node (x2) at (.5,.5) {$\scalebox{0.75}{$-$}$};
\node (x3) at (1.45,.5) {$\scalebox{0.75}{$+$}$};
\node (x4) at (2.5,.5) {$\scalebox{0.75}{$-$}$};
\node (x5) at (3.4,.5) {$\scalebox{0.75}{$+$}$};
\node (x6) at (4.4,.5) {$\scalebox{0.75}{$-$}$};

\end{tikzpicture}
\]
%
\end{figure}
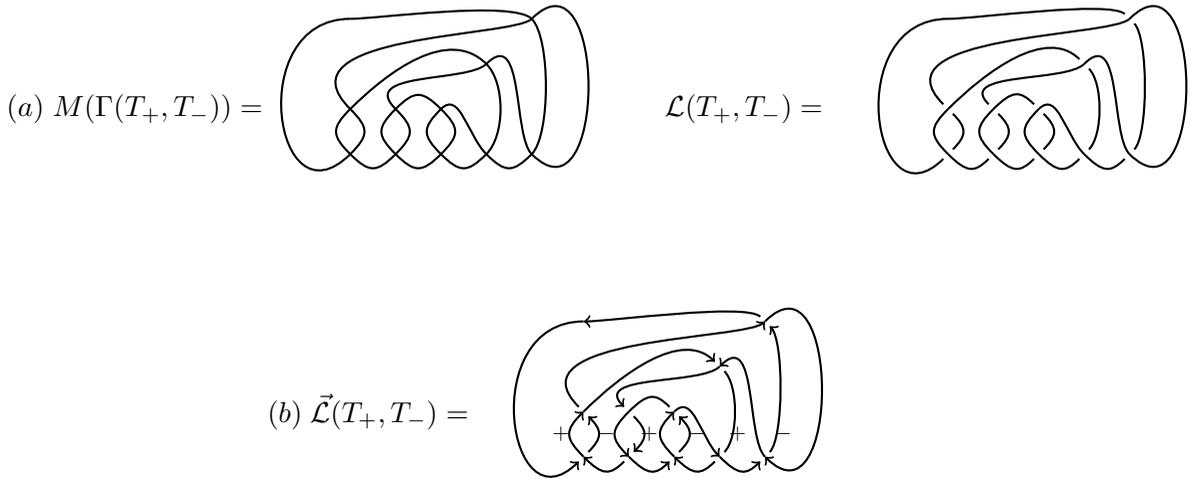

\section{Main result}
Before stating the main result of this paper, we recall the definition of the semi-dual graph $\Gamma(\vec{L})$ associated with an oriented link $\vec{L}$, \cite[Definition 5.3.2]{Jo14}.
Given an oriented link diagram, we colour the regions of the diagram in black and white (the unbounded region is coloured in white). 
The vertices of the semi-dual graph $\Gamma(\vec{L})$ correspond to the black regions of the link diagram, while its edges  correspond to the crossings and come with a sign according to the  rule shown in Figure \ref{fig5}. \\

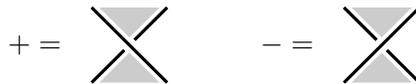
\begin{figure}
\caption{Signs of crossings. }\label{fig5}
\phantom{This text will be invisible} 
\[\begin{tikzpicture}[every path/.style={very thick}, every node/.style={transform shape, knot crossing, inner sep=1.5pt}]

\node (aaaa) at (-.75,0.5) {$\scalebox{1}{$+=$}$}; 

\node (a1) at (0,0) {}; 
\node (a2) at (0,1) {};
\node (a3) at (1,1) {};
\node (a4) at (1,0) {};
\node (a5) at (0.5,0.5) {};
%
%
\draw (a1.center) .. controls (a1.4 north east) .. (a5) .. controls (a5.4 north east) .. (a3.center);
\draw (a2.center) .. controls (a2.4 south east) .. (a5.center) .. controls (a5.4 south east) .. (a4.center);

  \fill [color=black,opacity=0.2]
               (0.1,0) -- (.5,.4) -- (.9,0) --(.1, 0);

  \fill [color=black,opacity=0.2]
               (0.1,1) -- (.5,.6) -- (.9,1) --(.1, 1); 
\end{tikzpicture}
\qquad 
\qquad\qquad
\begin{tikzpicture}[every path/.style={very thick}, every node/.style={transform shape, knot crossing, inner sep=1.5pt}]

\node (aaaa) at (1.5,0.5) {$\scalebox{1}{$-=$}$}; 

\node (b1) at (2.25,0) {}; 
\node (b2) at (2.25,1) {};
\node (b3) at (3.25,1) {};
\node (b4) at (3.25,0) {};
\node (b5) at (2.75,0.5) {};
%
%
\draw (b1.center) .. controls (b1.4 north east) .. (b5.center) .. controls (b5.4 north east) .. (b3.center);
\draw (b2.center) .. controls (b2.4 south east) .. (b5) .. controls (b5.4 south east) .. (b4.center);
%
%
%

  \fill [color=black,opacity=0.2]
               (2.35,0) -- (2.75,.4) -- (3.15,0) --(2.35, 0); 

  \fill [color=black,opacity=0.2]
               (2.35,1) -- (2.75,.6) -- (3.15,1) --(2.35, 1);

\end{tikzpicture}\]
\end{figure}
Moreover, if a shaded region has boundary positively oriented, we assign $+$ to the corresponding vertex of $\Gamma(\vec{L})$. Similarly, we assign $-$ to the vertices corresponding to the shaded regions whose boundary is negatively oriented. 
Semi-dual graphs are as good as link diagrams in representing knots and links. In this framework one can describe the three Reidemeister moves. In this paper we don't need all the moves, therefore we  recall in Figure \ref{Reide} only
the three  local moves relevant to the proof of the main result 
(if an edge has no sign, it means that the sign is arbitrary). 
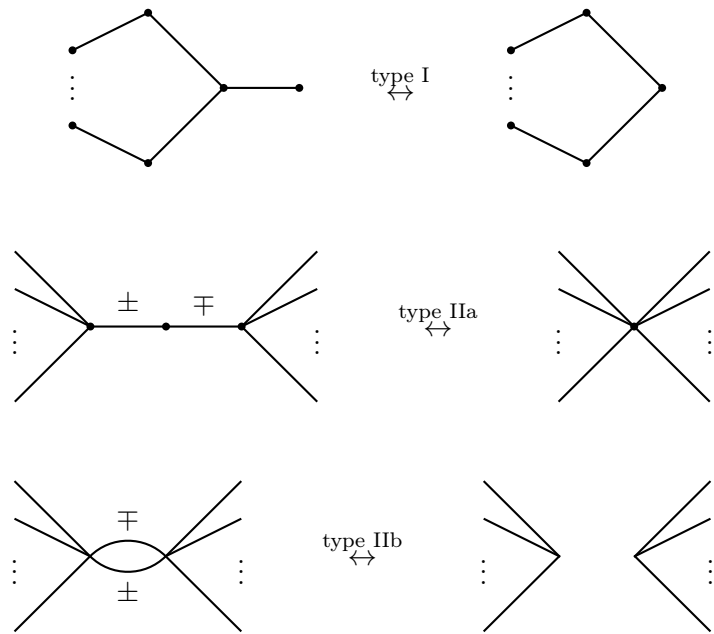
\begin{figure}
\caption{
The Reidemeister moves relevant to this paper.
A move of type I allows us to add (or remove) a 
1-valent vertex and its edge. 
In the presence of a 2-valent vertex whose edges have opposite signs, the two edges may be contracted according to a move of type IIa.
In a move of type IIb a pair of parallel edges with  opposite signs may be added (or removed). 
}
\label{Reide}
\[\begin{tikzpicture}[x=2cm, y=2cm,
    every edge/.style={
        draw,
      postaction={decorate,
                    decoration={markings}
                   }
        }
]

\node at (0,0.05) {$
\vdots$}; 

\draw[thick, thick] (0,0.25)--(.5,.5)--(1,0)--(.5,-.5)--(0,-0.25);
\draw[thick, thick] (1,0)--(1.5,0);
\fill (0,0.25)  circle[radius=1.5pt];
\fill (0,-0.25)  circle[radius=1.5pt];
\fill (.5,.5)  circle[radius=1.5pt];
\fill (1,0)  circle[radius=1.5pt];
\fill (.5,-.5)  circle[radius=1.5pt];
\fill (1.5,0)  circle[radius=1.5pt];

\end{tikzpicture}
\qquad 
\begin{tikzpicture}[x=2cm, y=2cm,
    every edge/.style={
        draw,
      postaction={decorate,
                    decoration={markings}
                   }
        }
]

\node at (0,.5) {
$\stackrel{\textrm{type I}}{\leftrightarrow}$}; 

\node at (0,0) {$\;$};

\end{tikzpicture}
\qquad
\begin{tikzpicture}[x=2cm, y=2cm,
    every edge/.style={
        draw,
      postaction={decorate,
                    decoration={markings}
                   }
        }
]

\node at (0,0.05) {
$\vdots$}; 

\draw[thick, thick] (0,0.25)--(.5,.5)--(1,0)--(.5,-.5)--(0,-0.25);
\fill (0,0.25)  circle[radius=1.5pt];
\fill (0,-0.25)  circle[radius=1.5pt];
\fill (.5,.5)  circle[radius=1.5pt];
\fill (1,0)  circle[radius=1.5pt];
\fill (.5,-.5)  circle[radius=1.5pt];

\end{tikzpicture}\]
\newline
\[
\begin{tikzpicture}[x=2cm, y=2cm,
    every edge/.style={
        draw,
      postaction={decorate,
                    decoration={markings}
                   }
        }
]

\node at (-0.5,-0.05) {
$\vdots$}; 
\node at (1.5,-0.05) {
$\vdots$}; 

\node at (.25,.15) {
$\pm$}; 
\node at (.75,.15) {
$\mp$}; 

\draw[thick] (0,0)--(-.5,.5);
\draw[thick] (0,0)--(-.5,-.5);
\draw[thick] (0,0)--(-.5,0.25);

\draw[thick] (0,0)--(.5,0);
\draw[thick] (.5,0)--(1,0);
\draw[thick] (1.5,0.25)--(1,0);
\draw[thick] (1.5,.5)--(1,0);
\draw[thick] (1.5,-.5)--(1,0);
\fill (0,0)  circle[radius=1.5pt];
\fill (.5,0)  circle[radius=1.5pt];
\fill (1,0)  circle[radius=1.5pt];

\end{tikzpicture}
\qquad 
\begin{tikzpicture}[x=2cm, y=2cm,
    every edge/.style={
        draw,
      postaction={decorate,
                    decoration={markings}
                   }
        }
]

\node at (0,.5) {
$\stackrel{\textrm{type IIa}}{\leftrightarrow}$}; 

\node at (0,0) {$\;$};

\end{tikzpicture}
\qquad
\begin{tikzpicture}[x=2cm, y=2cm,
    every edge/.style={
        draw,
      postaction={decorate,
                    decoration={markings}
                   }
        }
]

\node at (-0.5,-0.05) {
$\vdots$};
\node at (0.5,-0.05) {
$\vdots$}; 

\draw[thick] (0,0)--(-.5,.5);
\draw[thick] (0,0)--(-.5,-.5);
\draw[thick] (0,0)--(-.5,0.25);

\draw[thick] (.5,0.25)--(0,0);
\draw[thick] (.5,.5)--(0,0);
\draw[thick] (.5,-.5)--(0,0);
\fill (0,0)  circle[radius=1.5pt];

\end{tikzpicture}
\]
\newline
\[
\begin{tikzpicture}[x=2cm, y=2cm,
    every edge/.style={
        draw,
      postaction={decorate,
                    decoration={markings}
                   }
        }
]

\node at (.25,-.25) {
$\pm$}; 
\node at (.25,.25) {
$\mp$}; 

\draw[thick] (0,0)--(-.5,.5);
\draw[thick] (0,0)--(-.5,-.5);
\draw[thick] (0,0)--(-.5,0.25);

\draw[thick] (.5,0)--(1,0.25);
\draw[thick] (1,.5)--(.5,0);
\draw[thick] (1,-.5)--(.5,0);
\fill (0,0)  circle[radius=.5pt];
\fill (.5,0)  circle[radius=.5pt];

\draw[thick] (0,0) to[out=-45,in=-135] (.5,0);
\draw[thick] (0,0) to[out=45,in=135] (.5,0);

\node at (-0.5,-0.05) {
$\vdots$}; 
\node at (1,-0.05) {
$\vdots$}; 

\end{tikzpicture}
\qquad 
\begin{tikzpicture}[x=2cm, y=2cm,
    every edge/.style={
        draw,
      postaction={decorate,
                    decoration={markings}
                   }
        }
]

\node at (0,.5) {
$\stackrel{\textrm{type IIb}}{\leftrightarrow}$}; 

\node at (0,0) {$\;$};

\end{tikzpicture}
\qquad
\begin{tikzpicture}[x=2cm, y=2cm,
    every edge/.style={
        draw,
      postaction={decorate,
                    decoration={markings}
                   }
        }
]

\node at (-0.5,-0.05) {
$\vdots$}; 
\node at (1,-0.05) {
$\vdots$}; 

\draw[thick] (0,0)--(-.5,.5);
\draw[thick] (0,0)--(-.5,-.5);
\draw[thick] (0,0)--(-.5,0.25);

\draw[thick] (.5,0)--(1,0.25);
\draw[thick] (1,.5)--(.5,0);
\draw[thick] (1,-.5)--(.5,0);
\fill (0,0)  circle[radius=.5pt];
\fill (.5,0)  circle[radius=.5pt];


\end{tikzpicture}
\]
\end{figure}
We say that two signed graphs $\Gamma_1$ and $\Gamma_2$ are $2$-equivalent if they differ by planar isotopies and any of these three moves. 
Clearly, if two signed graphs are $2$-equivalent, then the associated knots/links are the same because, in terms of knot diagrams, the three moves are just Reidemeister moves of type I and II. 

We recall from \cite[Section 5.3.1, p. 34]{Jo14} that given an element $(T_+,T_-)$ of the Thompson group,
the semi-dual graph of $\mathcal{L}(T_+,T_-)$ coincides with 
the graph $\Gamma(T_+,T_-)$ described in the previous section.

Here follows a simple lemma which will come in handy in the proof of the main result of this paper.
\begin{lemma*} \label{Lemma1}
The following signed graphs are $2$-equivalent  
\[\begin{tikzpicture}[x=2cm, y=2cm,
    every edge/.style={
        draw,
      postaction={decorate,
                    decoration={markings}
                   }
        }
]

\draw[thick]  (-.5,0) to[out=15,in=165] (0,0);
\draw[thick]  (-.5,.25) to[out=15,in=120] (0,0);
\draw[thick]  (-.5,-.25) to[out=15,in=-120] (0,0);

\node at (0,.14) {$\scalebox{0.75}{+}$};
\node at (.05,-.05) {$\scalebox{0.75}{v}$};

\node at (.05,-.5) {$\scalebox{0.75}{$\;$}$};

\draw[thick]  (1.25,0) to[out=135,in=15] (.75,0);
\draw[thick]  (1.25,.25) to[out=135,in=60] (.75,0);
\draw[thick]  (1.25,-.25) to[out=135,in=-60] (.75,0);

\node at (.7,.14) {$\scalebox{0.75}{+}$};
\node at (.7,-.05) {$\scalebox{0.75}{w}$};

\end{tikzpicture}
\qquad \qquad
\begin{tikzpicture}[x=2cm, y=2cm,
    every edge/.style={
        draw,
      postaction={decorate,
                    decoration={markings}
                   }
        }
]

\draw[thick]  (-.5,0) to[out=15,in=165] (0,0);
\draw[thick]  (-.5,.25) to[out=15,in=120] (0,0);
\draw[thick]  (-.5,-.25) to[out=15,in=-120] (0,0);

\node at (0,.14) {$\scalebox{0.75}{+}$};
\node at (.05,-.15) {$\scalebox{0.75}{v}$};

\draw[thick]  (3,0) to[out=135,in=15] (2.5,0);
\draw[thick]  (3,.25) to[out=135,in=60] (2.5,0);
\draw[thick]  (3,-.25) to[out=135,in=-60] (2.5,0);

\node at (2.45,-.15) {$\scalebox{0.75}{w}$};


\draw[thick, red] (0,0) to[out=45,in=135] (.5,0);
\draw[thick, red] (.5,0) to[out=45,in=135] (1,0);
\draw[thick, red] (0,0) to[out=45,in=135] (1.5,0);
\draw[thick, red] (1.5,0) to[out=45,in=135] (2,0);
\draw[thick, red] (1.5,0) to[out=45,in=135] (2.5,0);

\draw[thick, red] (0,0) to[out=-45,in=-135] (.5,0);
\draw[thick, red] (.5,0) to[out=-45,in=-135] (1,0);
\draw[thick, red] (.5,0) to[out=-45,in=-135] (2,0);
\draw[thick, red] (.5,0) to[out=-45,in=-135] (2.5,0);
\draw[thick, red] (1,0) to[out=-45,in=-135] (1.5,0);

\node at (0,.14) {$\scalebox{0.75}{+}$};
\node at (.5,.14) {$\scalebox{0.75}{-}$};
\node at (1,.14) {$\scalebox{0.75}{+}$};
\node at (1.5,.14) {$\scalebox{0.75}{-}$};
\node at (2,.14) {$\scalebox{0.75}{+}$};
\node at (2.5,.14) {$\scalebox{0.75}{+}$};

\end{tikzpicture}
\]
where the edges of the red sub-graph
are positive if they lie in upper-half plane and negative if they are in the  lower-half plane (all the other edges are arbitrary).
\end{lemma*}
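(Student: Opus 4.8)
The plan is to reduce the right-hand graph to the left-hand one by an explicit finite sequence of the three moves of Figure \ref{Reide}, interspersed with planar isotopies. First I would fix notation: label the six vertices of the red subgraph $v=v_0,v_1,\dots,v_5=w$ from left to right, so their colours read $+,-,+,-,+,+$, and record the red edges by their endpoints. The positive (upper) red edges join the pairs $(v_0,v_1),(v_1,v_2),(v_0,v_3),(v_3,v_4),(v_3,v_5)$, and the negative (lower) red edges join $(v_0,v_1),(v_1,v_2),(v_1,v_4),(v_1,v_5),(v_2,v_3)$. The external (arbitrary) edges are attached only to $v_0$ and $v_5$ and are untouched throughout, so it suffices to delete all red edges together with $v_1,v_2,v_3,v_4$, while keeping $v_0$ and $v_5$ unmerged. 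Note that because a red edge is positive exactly when it lies above the axis, every opposite-sign pairing required by a move below is automatically available.

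The reduction I would carry out is as follows. First, the two edges between $v_1$ and $v_2$ — one positive, one negative — bound an innermost empty bigon (they are the innermost upper and lower arcs over the interval $[\tfrac{1}{2},1]$), so a move of type IIb removes them; this leaves $v_2$ joined to the rest only by the negative edge $(v_2,v_3)$, i.e.\ $v_2$ becomes $1$-valent, and a move of type I deletes $v_2$ and that edge. Next, $v_4$ is now $2$-valent, carrying the positive edge $(v_3,v_4)$ and the negative edge $(v_1,v_4)$ of opposite signs, so a move of type IIa contracts it, merging $v_1$ and $v_3$ — which share the colour $-$ — into a single vertex $u$. We are then left with $v_0,u,v_5$: the vertex $u$ is joined to $v_5$ by exactly one positive and one negative edge, an opposite-sign bigon removed by a type IIb move, detaching $w=v_5$ (which keeps its external edges); and the bundle between $v_0$ and $u$ consists of two positive edges and one negative edge, so a type IIb move removes an opposite-sign pair, leaving $u$ $1$-valent, after which a type I move deletes $u$. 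What remains is precisely $v$ and $w$ as two disjoint $+$-vertices carrying their external edges, namely the left-hand graph.

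Since each move in Figure \ref{Reide} preserves the associated oriented link, exhibiting this chain of $2$-equivalences proves the lemma. The step needing the most care is verifying, at every application of a IIb or type I move, that the local picture genuinely matches the template of Figure \ref{Reide}: that each removed bigon is innermost and encloses no other vertex or edge, and that each vertex declared $1$- or $2$-valent really is. For the first two moves this is immediate from the explicit planar embedding described above. The delicate point is the contraction of $v_4$, which identifies the non-adjacent vertices $v_1$ and $v_3$ and so redraws the embedding; here I would use a planar isotopy to bring the surviving arcs together and then check that the edges remaining between $u$ and $v_5$, and between $u$ and $v_0$, do bound empty bigons. This bookkeeping of the planar embedding through the contraction is the main obstacle; once it is settled, the remaining moves are routine.
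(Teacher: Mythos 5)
Your proof is correct and takes essentially the same route as the paper's: both reduce the right-hand graph to the left-hand one by an explicit sequence of six moves (three of type IIb, one type IIa contraction at the $2$-valent vertex $v_4$ merging $v_1$ and $v_3$, and two type I deletions), with your sequence differing only in the order of the moves and in which opposite-sign parallel pairs are cancelled at which stage. The planar bookkeeping you single out (checking that each cancelled pair bounds an empty bigon after the contraction) is the same implicit verification underlying the paper's six-move count, so no genuinely different idea is involved.
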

\begin{proof}
It's possible to recover the graph on the left 
 from the other one by applying six moves.
First of all, one erases the two pairs of   parallel edges  (type IIb moves) and contracts the two consecutive edges with opposite signs (type IIa move). Then, one applies a type IIb move to erase the new pair of parallel edges and, finally, 
the  graph on the left is recovered thanks to two  type I moves. 
\end{proof}

We are now 
in a position to state 
the main result of this paper. 
\begin{theorem*}
Given an oriented link $\vec L$ there is an element $g\in \vec{F}$ such that $\vec{\mathcal{L}}(g)=\vec{L}$.  
\end{theorem*}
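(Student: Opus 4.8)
The plan is to pass to semi-dual graphs and reduce the theorem to a purely combinatorial statement. Given $\vec L$, I fix a diagram and form its semi-dual graph $G=\Gamma(\vec L)$, with edge signs read off from Figure~\ref{fig5} and vertex signs read off from the orientation of the black regions. Since the semi-dual graph of $\mathcal{L}(T_+,T_-)$ coincides with $\Gamma(T_+,T_-)$, and since $2$-equivalent signed graphs carry the same oriented link, it suffices to exhibit $(T_+,T_-)\in\vec F$ for which $\Gamma(T_+,T_-)$ is $2$-equivalent to $G$. As in Jones' framework the vertex-sign labelling of $G$ is a proper $2$-colouring, so $G$ is automatically $2$-colourable and all three moves preserve this property; hence the only real requirements on the target graph are that it be \emph{connected} and that it be drawable in the rigid ``$\Gamma$-shape'', namely with all vertices on a line, positive edges in the upper half-plane and negative edges in the lower half-plane, in a way realised by a genuine pair of binary trees.

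The essential point --- and the source of Jones' weaker conclusion --- is that a $\Gamma$-graph is always connected, whereas $G$ need not be. Jones restores connectivity at the cost of inserting material that adds unlinked unknots to the link. My plan is instead to glue the connected components of $G$ together using the preceding Lemma, which trades two disjoint positive vertices for two positive vertices joined by the red gadget \emph{without changing the link}, the gadget being $2$-equivalent to the two bare vertices. Because its sign pattern $+,-,+,-,+,+$ is compatible with both endpoints being positive, each gluing preserves $2$-colourability and keeps the oriented link exactly equal to $\vec L$, the gadget being drawn as arcs above and below the line so as to be compatible with a $\Gamma$-shape layout. Performing finitely many such gluings to make the graph connected produces a signed graph $G'$ that is $2$-equivalent to $G$ and carries no spurious unknots.

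To run this argument I need a positive vertex inside each component, and I must respect the fact that the vertex signs are fixed by the orientation of $\vec L$ (they cannot be flipped, as that would reverse a sub-link). A component containing at least one edge is connected and bipartite, hence contains vertices of both signs, so a positive one is available; the only exception is an isolated vertex, corresponding to a crossingless unknot component of $\vec L$, and if it is negative I first attach a $1$-valent vertex by a type I move, creating the positive attachment point I need. I then always glue a positive vertex of one component to a positive vertex of the next, so that the globally defined sign-labelling of $G'$ remains a proper colouring.

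Once $G'$ is connected, $2$-colourable and in $\Gamma$-shape, I recover the tree pair $(T_+,T_-)$ by the reconstruction of \cite[Lemma~4.1.4]{Jo14}, the $2$-colourability forcing $(T_+,T_-)\in\vec F$, whence $\vec{\mathcal L}(T_+,T_-)=\vec L$. The step I expect to be the main obstacle is checking that the components of $G$ can be brought \emph{simultaneously} into $\Gamma$-shape, i.e.\ laid out on a common line with positive edges above and negative edges below and joined up by the gadgets while staying planar; this upward/downward planarity, rather than any single move, is the real content, and the purpose of the Lemma's gadget is precisely to furnish the connective tissue that makes such a joint layout possible without perturbing $\vec L$.
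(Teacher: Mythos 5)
You have correctly isolated the paper's key new tool --- the Lemma's gadget, whose colour pattern $+,-,+,-,+,+$ lets one join two $+$-coloured vertices without changing the link --- and your use of it to glue components (with a type I move to manufacture a $+$ attachment point when a component is a single $-$ vertex) is essentially the paper's Case 1, second subcase. The gap is your reduction of the theorem to ``connectivity plus $\Gamma$-shape drawability'': connectivity is far from sufficient, and the step you explicitly defer (``the main obstacle\dots checking that the components can be brought simultaneously into $\Gamma$-shape'') is not a residual technicality but the bulk of the proof. A connected, standard, properly $2$-coloured signed graph is in general \emph{not} the $\Gamma$-graph of any tree pair: a $\Gamma$-graph requires every vertex other than $(0,0)$ to have exactly one incoming edge in the upper half-plane and exactly one in the lower half-plane, with all upper edges positive and all lower edges negative. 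None of this follows from connectivity. For instance, take vertices $0,1,2$ on the line with only the edges $0$--$2$ and $1$--$2$, both in the upper half-plane: the graph is connected, yet $e^{\rm in}_1=\emptyset$, and the $2$-colouring forces vertices $0$ and $1$ to have the \emph{same} colour, so the very colour-parity obstruction you attribute to disconnectedness reappears inside a single component. Likewise a vertex may have $|e_v^{\rm in}\cap e^{\rm up}|>1$, and an edge's crossing sign may disagree with the half-plane it is forced into by planarity. This also means your diagnosis of Jones' weaker theorem is off: the unlinked unknots in \cite[Theorem 5.3.15]{Jo14} arise from these colour clashes in general, not merely from $\Gamma(\vec L)$ being disconnected.

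The paper's proof consists precisely of the case analysis your proposal omits: after standardizing $\Gamma(\vec L)$ via \cite[Lemma 5.3.6]{Jo14}, it treats Case 1 ($e^{\rm in}_w=\emptyset$), Case 2 ($|e^{\rm in}_w|=1$), Case 3 (more than one incoming edge on the same side of the line), and Case 4 (a negative edge in the upper half-plane or a positive one in the lower half-plane), giving in each case an explicit local substitution that is $2$-equivalent to the original configuration \emph{and} compatible with the vertex signs fixed by the orientation of $\vec L$; the Lemma's gadget is invoked not once but in every one of these cases (four times over, suitably reflected and concatenated, in Case 4). To complete your argument you would have to supply exactly these substitutions, i.e.\ show that every incidence and sign defect --- not just disconnectedness --- can be repaired by moves of type I, IIa, IIb together with the gadget, while preserving the given colouring. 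As written, your proposal establishes only that $\Gamma(\vec L)$ may be assumed connected, which in the paper's organisation is a small part of one of the four cases.
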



The idea of the proof is fairly simple and we actually follow the same strategy used in \cite[Lemma 5.3.13]{Jo14}. Given an oriented link $\vec{L}$ we draw the semi-dual graph $\Gamma(\vec{L})$. 
If 
 $\Gamma(\vec{L})$ 
 consists of two trees, $\Gamma_+$ in the upper-half plane and $\Gamma_-$ in the lower-half plane,
 satisfying the following properties 
\cite[Proposition 4.1.3]{Jo14} 
\begin{enumerate}
\item the vertices are $(0,0)$, \ldots , $(N,0)$; 
\item each vertex other than $(0,0)$  is connected to exactly one vertex to its left;
\item each edge $e$ can be parametrized by a function $(x_e(t),y_e(t))$ with $x'_e(t)>0$, for all $t\in [0,1]$, and either $y_e(t)>0$, for all $t\in ]0,1[$ or $y_e(t)<0$, for all $t\in ]0,1[$;
\end{enumerate}
then we know that it is the $\Gamma$-graph of a certain element of $\vec{F}$ 
\cite[Lemma 4.1.4]{Jo14}. If it doesn't,  
we   apply the Reidemeister moves (actually the proof only requires the Reidemeister moves I and II) and get a graph which represents the same oriented knot/link, but satisfies the aforementioned conditions. 
In order to give the proof, we recall the notation from Jones' paper \cite[Definition 5.3.7 and Definition 5.3.9]{Jo14}.
Given a vertex $v$ of $\Gamma(\vec{L})$ we set 
\begin{align*}
e^{\rm up}& :=\{e\in E(\Gamma(\vec{L}))\; | \; e \textrm{ lies in the upper-half plane}\}\\
e^{\rm down}& :=\{e\in E(\Gamma(\vec{L}))\; | \; e \textrm{ lies in the lower-half plane}\}\\
e_v^{\rm in}& :=\{e\in E(\Gamma(\vec{L}))\; | \; \textrm{target}(e)=v \}\\
e_v^{\rm out}& :=\{e\in E(\Gamma(\vec{L}))\; | \; \textrm{source}(e)=v \}\\
\end{align*}

Here follows the proof of the theorem.
\begin{proof}
Let $\vec{L}$ and $\Gamma(\vec{L})$ be an oriented knot/link and it semi-dual graph, respectively.
Thanks to \cite[Lemma 5.3.6]{Jo14} we may suppose that $\Gamma(\vec{L})$ is \emph{standard} in the sense of  \cite[Definition 5.3.5]{Jo14}. 
This means that vertices are $(0,0)$, \ldots , $(N,0)$ and each edge $e$ can be parametrized by a function $(x_e(t),y_e(t))$ with $x'_e(t)>0$, for all $t\in [0,1]$, and either $y_e(t)>0$, for all $t\in ]0,1[$ or $y_e(t)<0$, for all $t\in ]0,1[$.
There are some cases that we have to consider now.

\noindent
\textbf{Case 1}: There is a vertex $w$ different from $(0,0)$ with $e^{\rm in}_w=\emptyset$. 
Denote by $v$ the vertex to the left of $w$.
We have to consider two subcases depending on whether $v$ and $w$ have the same colour or not, which (up to symmetry) are

\[\begin{tikzpicture}[x=2cm, y=2cm,
    every edge/.style={
        draw,
      postaction={decorate,
                    decoration={markings}
                   }
        }
]

\draw[thick] (-.5,0) to[out=15,in=165] (0,0);
\draw[thick] (-.5,.25) to[out=15,in=120] (0,0);
\draw[thick] (-.5,-.25) to[out=15,in=-120] (0,0);

\node at (0,.14) {$\scalebox{0.75}{+}$};
\node at (.05,-.05) {$\scalebox{0.75}{v}$};

\draw[thick] (1.25,0) to[out=135,in=15] (.75,0);
\draw[thick] (1.25,.25) to[out=135,in=60] (.75,0);
\draw[thick] (1.25,-.25) to[out=135,in=-60] (.75,0);

\node at (.7,.14) {$\scalebox{0.75}{-}$};
\node at (.7,-.05) {$\scalebox{0.75}{w}$};

\end{tikzpicture}
\qquad 
\qquad 
\begin{tikzpicture}[x=2cm, y=2cm,
    every edge/.style={
        draw,
      postaction={decorate,
                    decoration={markings}
                   }
        }
]

\draw[thick] (-.5,0) to[out=15,in=165] (0,0);
\draw[thick] (-.5,.25) to[out=15,in=120] (0,0);
\draw[thick] (-.5,-.25) to[out=15,in=-120] (0,0);

\node at (0,.14) {$\scalebox{0.75}{+}$};
\node at (.05,-.05) {$\scalebox{0.75}{v}$};

\draw[thick] (1.25,0) to[out=135,in=15] (.75,0);
\draw[thick] (1.25,.25) to[out=135,in=60] (.75,0);
\draw[thick] (1.25,-.25) to[out=135,in=-60] (.75,0);

\node at (.7,.14) {$\scalebox{0.75}{+}$};
\node at (.7,-.05) {$\scalebox{0.75}{w}$};

\end{tikzpicture}
\]
The first subcase (i.e. the one in which the colour of $v$ and $w$ are different) can be handled as in \cite[proof of Lemma 5.3.13, Case 1]{Jo14}, while for the second one we need a new substitution. Here follow the substitutions for both the subcases.
\[\begin{tikzpicture}[x=1.5cm, y=1.5cm,
    every edge/.style={
        draw,
      postaction={decorate,
                    decoration={markings}
                   }
        }
]

\draw[thick] (-.5,0) to[out=15,in=165] (0,0);
\draw[thick] (-.5,.25) to[out=15,in=120] (0,0);
\draw[thick] (-.5,-.25) to[out=15,in=-120] (0,0);

\node at (0,.14) {$\scalebox{0.65}{+}$};
\node at (.025,-.15) {$\scalebox{0.65}{v}$};

\draw[thick] (1.25,0) to[out=135,in=15] (.75,0);
\draw[thick] (1.25,.25) to[out=135,in=60] (.75,0);
\draw[thick] (1.25,-.25) to[out=135,in=-60] (.75,0);

\node at (.7,.14) {$\scalebox{0.65}{-}$};
\node at (.75,-.15) {$\scalebox{0.65}{w}$};

\node at (.75,-.5) {$\scalebox{0.65}{$\;$}$};

\draw[thick, red] (.,0) to[out=-45,in=-135] (.75,0);
\draw[thick, red] (.,0) to[out=45,in=135] (.75,0);

\end{tikzpicture}
\qquad\qquad\qquad 
\begin{tikzpicture}[x=1.5cm, y=1.5cm,
    every edge/.style={
        draw,
      postaction={decorate,
                    decoration={markings}
                   }
        }
]

\draw[thick] (-.5,0) to[out=15,in=165] (0,0);
\draw[thick] (-.5,.25) to[out=15,in=120] (0,0);
\draw[thick] (-.5,-.25) to[out=15,in=-120] (0,0);

\node at (0,.14) {$\scalebox{0.65}{+}$};
\node at (.05,-.15) {$\scalebox{0.65}{v}$};

\draw[thick] (3,0) to[out=135,in=15] (2.5,0);
\draw[thick] (3,.25) to[out=135,in=60] (2.5,0);
\draw[thick] (3,-.25) to[out=135,in=-60] (2.5,0);

\node at (2.45,-.15) {$\scalebox{0.65}{w}$};


\draw[thick, red] (0,0) to[out=45,in=135] (.5,0);
\draw[thick, red] (.5,0) to[out=45,in=135] (1,0);
\draw[thick, red] (0,0) to[out=45,in=135] (1.5,0);
\draw[thick, red] (1.5,0) to[out=45,in=135] (2,0);
\draw[thick, red] (1.5,0) to[out=45,in=135] (2.5,0);

\draw[thick, red] (0,0) to[out=-45,in=-135] (.5,0);
\draw[thick, red] (.5,0) to[out=-45,in=-135] (1,0);
\draw[thick, red] (.5,0) to[out=-45,in=-135] (2,0);
\draw[thick, red] (.5,0) to[out=-45,in=-135] (2.5,0);
\draw[thick, red] (1,0) to[out=-45,in=-135] (1.5,0);

\node at (0,.14) {$\scalebox{0.65}{+}$};
\node at (.5,.14) {$\scalebox{0.65}{-}$};
\node at (1,.14) {$\scalebox{0.65}{+}$};
\node at (1.5,.14) {$\scalebox{0.65}{-}$};
\node at (2,.14) {$\scalebox{0.65}{+}$};
\node at (2.5,.14) {$\scalebox{0.65}{+}$};

\end{tikzpicture}
\]
(When we insert new edges, those in upper-half plane are positive, whereas those in the lower-half plane are negative.)
The  graph on the left is clearly $2$-equivalent to the original graph. Indeed, it is enough to apply a type IIb move. 
As for graph on the right, the claim is precisely the content of the Lemma. 

\noindent
\textbf{Case 2}: $\Gamma(\vec{L})$ has a vertex $w$ with $|e_w^{\rm in}|=1$. We discuss the case in which the incoming edge to $w$ is in the upper-half plane, the other case can be handled in a similar manner.
As in the former case we   have two possible situations
\[\begin{tikzpicture}[x=2cm, y=2cm,
    every edge/.style={
        draw,
      postaction={decorate,
                    decoration={markings}
                   }
        }
]

\draw[thick] (-.5,0) to[out=15,in=165] (0,0);
\draw[thick] (-.5,.25) to[out=15,in=120] (0,0);
\draw[thick] (-.5,-.25) to[out=15,in=-120] (0,0);

\node at (0,.14) {$\scalebox{0.75}{+}$};
\node at (.05,-.05) {$\scalebox{0.75}{v}$};

\draw[thick] (1.25,0) to[out=135,in=15] (.75,0);
\draw[thick] (1.25,.25) to[out=135,in=60] (.75,0);
\draw[thick] (1.25,-.25) to[out=135,in=-60] (.75,0);
\draw[thick] (.5,.25) to[out=135,in=-60] (.75,0);

\node at (.7,.14) {$\scalebox{0.75}{-}$};
\node at (.7,-.05) {$\scalebox{0.75}{w}$};

\end{tikzpicture}
\qquad\qquad\qquad 
\begin{tikzpicture}[x=2cm, y=2cm,
    every edge/.style={
        draw,
      postaction={decorate,
                    decoration={markings}
                   }
        }
]

\draw[thick] (-.5,0) to[out=15,in=165] (0,0);
\draw[thick] (-.5,.25) to[out=15,in=120] (0,0);
\draw[thick] (-.5,-.25) to[out=15,in=-120] (0,0);

\node at (0,.14) {$\scalebox{0.75}{+}$};
\node at (.05,-.05) {$\scalebox{0.75}{v}$};

\draw[thick] (1.25,0) to[out=135,in=15] (.75,0);
\draw[thick] (1.25,.25) to[out=135,in=60] (.75,0);
\draw[thick] (1.25,-.25) to[out=135,in=-60] (.75,0);
\draw[thick] (.5,.25) to[out=135,in=-60] (.75,0);

\node at (.7,.14) {$\scalebox{0.75}{+}$};
\node at (.7,-.05) {$\scalebox{0.75}{w}$};

\end{tikzpicture}
\]
These two cases can be dealt with the following substitutions.
\[
\begin{tikzpicture}[x=2cm, y=2cm,
    every edge/.style={
        draw,
      postaction={decorate,
                    decoration={markings}
                   }
        }
]

\draw[thick] (-.5,0) to[out=15,in=165] (0,0);
\draw[thick] (-.5,.25) to[out=15,in=120] (0,0);
\draw[thick] (-.5,-.25) to[out=15,in=-120] (0,0);

\node at (0,.14) {$\scalebox{0.75}{+}$};
\node at (.0,-.1) {$\scalebox{0.75}{v}$};

\draw[thick] (3.5,0) to[out=135,in=15] (3,0);
\draw[thick] (3.5,.25) to[out=135,in=60] (3,0);
\draw[thick] (3.5,-.25) to[out=135,in=-60] (3,0);
\draw[thick] (2.75,.25) to[out=135,in=-60] (3,0);

\node at (3,.14) {$\scalebox{0.75}{-}$};
\node at (3,-.1) {$\scalebox{0.75}{w}$};

\draw[thick, red] (0,0) to[out=45,in=135] (.5,0);
\draw[thick, red] (.5,0) to[out=45,in=135] (1,0);
\draw[thick, red] (0,0) to[out=45,in=135] (1.5,0);
\draw[thick, red] (1.5,0) to[out=45,in=135] (2,0);
\draw[thick, red] (1.5,0) to[out=45,in=135] (2.5,0);

\draw[thick, red] (0,0) to[out=-45,in=-135] (.5,0);
\draw[thick, red] (.5,0) to[out=-45,in=-135] (1,0);
\draw[thick, red] (.5,0) to[out=-45,in=-135] (2,0);
\draw[thick, red] (.5,0) to[out=-45,in=-135] (2.5,0);
\draw[thick, red] (1,0) to[out=-45,in=-135] (1.5,0);
\draw[thick, blue] (2.5,0) to[out=-45,in=-135] (3,0);

\node at (0,.14) {$\scalebox{0.75}{+}$};
\node at (.5,.14) {$\scalebox{0.75}{-}$};
\node at (1,.14) {$\scalebox{0.75}{+}$};
\node at (1.5,.14) {$\scalebox{0.75}{-}$};
\node at (2,.14) {$\scalebox{0.75}{+}$};
\node at (2.5,.14) {$\scalebox{0.75}{+}$};

\end{tikzpicture}
\qquad
\qquad
\begin{tikzpicture}[x=2cm, y=2cm,
    every edge/.style={
        draw,
      postaction={decorate,
                    decoration={markings}
                   }
        }
]

\draw[thick] (-.5,0) to[out=15,in=165] (0,0);
\draw[thick] (-.5,.25) to[out=15,in=120] (0,0);
\draw[thick] (-.5,-.25) to[out=15,in=-120] (0,0);

\node at (0,.14) {$\scalebox{0.75}{+}$};
\node at (.025,-.1) {$\scalebox{0.75}{v}$};

\node at (.025,-.5) {$\scalebox{0.75}{$\;$}$};

\draw[thick] (1.5,0) to[out=135,in=15] (1,0);
\draw[thick] (1.5,.25) to[out=135,in=60] (1,0);
\draw[thick] (1.5,-.25) to[out=135,in=-60] (1,0);
\draw[thick] (.75,.25) to[out=135,in=-60] (1,0);

\node at (.49,.14) {$\scalebox{0.75}{-}$};

\node at (.99,.14) {$\scalebox{0.75}{+}$};
\node at (.99,-.09) {$\scalebox{0.75}{w}$};


\draw[thick, red] (0,0) to[out=45,in=135] (.5,0);

\draw[thick, red] (0,0) to[out=-45,in=-135] (.5,0);
\draw[thick, blue] (.5,0) to[out=-45,in=-135] (1,0);

\end{tikzpicture}
\]
The  graph on the left is easily seen to be $2$-equivalent to the initial graph thanks to an application of the Lemma 
 (to the red subgraph) and a type I move.
The graph on the right can be dealt with the application of a type IIb move and a type I move.

\noindent
\textbf{Case 3}: The graph $\Gamma(\vec{L})$ has a vertex $v$ with $|e_v^{\rm in}\cap e^{\rm up}|>1$ or $|e_v^{\rm in}\cap e^{\rm down}|>1$.
We show how to deal with the first case, the other case being similar. Here is the situation and the corresponding substitution.

\[\begin{tikzpicture}[x=2cm, y=2cm,
    every edge/.style={
        draw,
      postaction={decorate,
                    decoration={markings}
                   }
        }
]

\draw[thick] (-.5,.5) to[out=15,in=90] (0,0);
\draw[thick] (-.5,.25) to[out=15,in=120] (0,0);
\draw[thick] (-.5,-.25) to[out=15,in=-120] (0,0);

\node at (.14,0) {$\scalebox{0.75}{+}$};
\node at (.0,-.1) {$\scalebox{0.75}{v}$};

\node at (-.5,-.5) {$\scalebox{0.75}{$\;$}$};

\draw[thick] (.5,.25) to[out=135,in=60] (.,0);
\draw[thick] (.5,-.25) to[out=135,in=-60] (.,0);

\end{tikzpicture}
\qquad
\begin{tikzpicture}[x=2cm, y=2cm,
    every edge/.style={
        draw,
      postaction={decorate,
                    decoration={markings}
                   }
        }
]

\draw[thick] (3,.5) to[out=15,in=90] (3.5,0);
\draw[thick] (-.5,.25) to[out=15,in=120] (0,0);
\draw[thick] (-.5,-.25) to[out=15,in=-120] (0,0);

\node at (0,.14) {$\scalebox{0.75}{+}$};
\node at (.0,-.1) {$\scalebox{0.75}{v}$};

\draw[thick] (4,.25) to[out=135,in=60] (3.5,0);
\draw[thick] (4,-.25) to[out=135,in=-60] (3.5,0);

\node at (3.5,-.14) {$\scalebox{0.75}{+}$};


\draw[thick, red] (0,0) to[out=45,in=135] (.5,0);
\draw[thick, red] (.5,0) to[out=45,in=135] (1,0);
\draw[thick, red] (0,0) to[out=45,in=135] (1.5,0);
\draw[thick, red] (1.5,0) to[out=45,in=135] (2,0);
\draw[thick, red] (1.5,0) to[out=45,in=135] (2.5,0);
\draw[thick, blue] (0,0) to[out=45,in=135] (3,0);

\draw[thick, red] (0,0) to[out=-45,in=-135] (.5,0);
\draw[thick, red] (.5,0) to[out=-45,in=-135] (1,0);
\draw[thick, red] (.5,0) to[out=-45,in=-135] (2,0);
\draw[thick, red] (.5,0) to[out=-45,in=-135] (2.5,0);
\draw[thick, red] (1,0) to[out=-45,in=-135] (1.5,0);
\draw[thick, blue] (2.5,0) to[out=-45,in=-135] (3,0);
\draw[thick, blue] (3,0) to[out=-45,in=-135] (3.5,0);

\node at (0,.14) {$\scalebox{0.75}{+}$};
\node at (.5,.14) {$\scalebox{0.75}{-}$};
\node at (1,.14) {$\scalebox{0.75}{+}$};
\node at (1.5,.14) {$\scalebox{0.75}{-}$};
\node at (2,.14) {$\scalebox{0.75}{+}$};
\node at (2.5,.14) {$\scalebox{0.75}{+}$};
\node at (3,.14) {$\scalebox{0.75}{-}$};

\end{tikzpicture}
\]
The  graph on the right can be seen to be $2$-equivalent to the other one by applying the  Lemma 
 (to the red subgraph), a type I move, and a type IIa move.

\noindent
\textbf{Case 4}: After repeated applications of the previous substitutions  all the vertices (except the leftmost) have two incoming edges. The only problem that we may have is a negative edge in the upper-half plane or a positive edge in lower-half plane, the other case is similar. We show how to deal with the first case. We have two subcases to deal with. It is appropriate to say a note of warning: in order to distinguish the colours of the vertices from the signs of the edges, we will colour the latters in red.
\[\begin{tikzpicture}[x=2cm, y=2cm,
    every edge/.style={
        draw,
      postaction={decorate,
                    decoration={markings}
                   }
        }
]

\draw[thick] (-.5,.25) to[out=15,in=120] (0,0);
\draw[thick] (-.5,-.25) to[out=15,in=-120] (0,0);

\node at (0,.14) {$\scalebox{0.75}{+}$};
\node at (.05,-.05) {$\scalebox{0.75}{v}$};

\draw[thick] (1.25,0) to[out=135,in=15] (.75,0);
\draw[thick] (1.25,.25) to[out=135,in=60] (.75,0);
\draw[thick] (1.25,-.25) to[out=135,in=-60] (.75,0);
\draw[thick] (.25,.25) to[out=-45,in=135] (.75,0);
\draw[thick] (.25,-.25) to[out=45,in=-135] (.75,0);

\node[red] at (.6,.17) {$\scalebox{0.75}{-}$};
\node at (.75,-.1) {$\scalebox{0.75}{w}$};
\node at (.85,-.04) {$\scalebox{0.75}{-}$};

\end{tikzpicture}
\qquad\qquad
\begin{tikzpicture}[x=2cm, y=2cm,
    every edge/.style={
        draw,
      postaction={decorate,
                    decoration={markings}
                   }
        }
]

\draw[thick] (-.5,.25) to[out=15,in=120] (0,0);
\draw[thick] (-.5,-.25) to[out=15,in=-120] (0,0);

\node at (0,.14) {$\scalebox{0.75}{+}$};
\node at (.05,-.05) {$\scalebox{0.75}{v}$};

\draw[thick] (1.25,0) to[out=135,in=15] (.75,0);
\draw[thick] (1.25,.25) to[out=135,in=60] (.75,0);
\draw[thick] (1.25,-.25) to[out=135,in=-60] (.75,0);
\draw[thick] (.25,.25) to[out=-45,in=135] (.75,0);
\draw[thick] (.25,-.25) to[out=45,in=-135] (.75,0);

\node[red] at (.6,.17) {$\scalebox{0.75}{-}$};
\node at (.75,-.1) {$\scalebox{0.75}{w}$};
\node at (.89,-.04) {$\scalebox{0.75}{+}$};

\end{tikzpicture}
\]
For the first subcase we use the following substitution.
\[\begin{tikzpicture}[x=.9cm, y=.9cm,
    every edge/.style={
        draw,
      postaction={decorate,
                    decoration={markings}
                   }
        }
]

\draw[thick] (-.5,.25) to[out=15,in=120] (0,0);
\draw[thick] (-.5,-.25) to[out=15,in=-120] (0,0);

\node at (0,.14) {$\scalebox{0.5}{+}$};
\node at (.,-.15) {$\scalebox{0.5}{v}$};


\draw[thick, red] (0,0) to[out=45,in=135] (.5,0);
\draw[thick, red] (.5,0) to[out=45,in=135] (1,0);
\draw[thick, red] (0,0) to[out=45,in=135] (1.5,0);
\draw[thick, red] (1.5,0) to[out=45,in=135] (2,0);
\draw[thick, red] (1.5,0) to[out=45,in=135] (2.5,0);

\draw[thick, red] (0,0) to[out=-45,in=-135] (.5,0);
\draw[thick, red] (.5,0) to[out=-45,in=-135] (1,0);
\draw[thick, red] (.5,0) to[out=-45,in=-135] (2,0);
\draw[thick, red] (.5,0) to[out=-45,in=-135] (2.5,0);
\draw[thick, red] (1,0) to[out=-45,in=-135] (1.5,0);
\draw[thick, blue] (2.5,0) to[out=-45,in=-135] (3,0);

\node at (0,.14) {$\scalebox{0.5}{+}$};
\node at (.5,.14) {$\scalebox{0.5}{-}$};
\node at (1,.14) {$\scalebox{0.5}{+}$};
\node at (1.5,.14) {$\scalebox{0.5}{-}$};
\node at (2,.14) {$\scalebox{0.5}{+}$};
\node at (2.5,.14) {$\scalebox{0.5}{+}$};


\draw[thick, red] (3,0) to[out=45,in=135] (3.5,0);
\draw[thick, red] (3.5,0) to[out=45,in=135] (4,0);
\draw[thick, red] (3,0) to[out=45,in=135] (4.5,0);
\draw[thick, red] (4.5,0) to[out=45,in=135] (5,0);
\draw[thick, red] (4.5,0) to[out=45,in=135] (5.5,0);
\draw[thick, blue] (5.5,0) to[out=45,in=135] (6,0);

\draw[thick, red] (3,0) to[out=-45,in=-135] (3.5,0);
\draw[thick, red] (3.5,0) to[out=-45,in=-135] (4,0);
\draw[thick, red] (3.5,0) to[out=-45,in=-135] (5,0);
\draw[thick, red] (3.5,0) to[out=-45,in=-135] (5.5,0);
\draw[thick, red] (4,0) to[out=-45,in=-135] (4.5,0);
\draw[thick, blue] (3,0) to[out=-45,in=-135] (6,0);

\node at (3,.14) {$\scalebox{0.5}{-}$};
\node at (3.5,.14) {$\scalebox{0.5}{+}$};
\node at (4,.14) {$\scalebox{0.5}{-}$};
\node at (4.5,.14) {$\scalebox{0.5}{+}$};
\node at (5,.14) {$\scalebox{0.5}{-}$};
\node at (5.5,.14) {$\scalebox{0.5}{-}$};
\node at (6,.14) {$\scalebox{0.5}{+}$};


\draw[thick, red] (6,0) to[out=45,in=135] (6.5,0);
\draw[thick, red] (6.5,0) to[out=45,in=135] (7,0);
\draw[thick, red] (6,0) to[out=45,in=135] (7.5,0);
\draw[thick, red] (7.5,0) to[out=45,in=135] (8,0);
\draw[thick, red] (7.5,0) to[out=45,in=135] (8.5,0);
\draw[thick, blue] (8.5,0) to[out=45,in=135] (9,0);

\draw[thick, red] (6,0) to[out=-45,in=-135] (6.5,0);
\draw[thick, red] (6.5,0) to[out=-45,in=-135] (7,0);
\draw[thick, red] (6.5,0) to[out=-45,in=-135] (8,0);
\draw[thick, red] (6.5,0) to[out=-45,in=-135] (8.5,0);
\draw[thick, red] (7,0) to[out=-45,in=-135] (7.5,0);
\draw[thick, blue] (6,0) to[out=-45,in=-135] (9,0);

\node at (6.5,.14) {$\scalebox{0.5}{-}$};
\node at (7,.14) {$\scalebox{0.5}{+}$};
\node at (7.5,.14) {$\scalebox{0.5}{-}$};
\node at (8,.14) {$\scalebox{0.5}{+}$};
\node at (8.5,.14) {$\scalebox{0.5}{+}$};
\node at (9,.14) {$\scalebox{0.5}{-}$};


\draw[thick, red] (9,0) to[out=45,in=135] (9.5,0);
\draw[thick, red] (9.5,0) to[out=45,in=135] (10,0);
\draw[thick, red] (9,0) to[out=45,in=135] (10.5,0);
\draw[thick, red] (10.5,0) to[out=45,in=135] (11,0);
\draw[thick, red] (10.5,0) to[out=45,in=135] (11.5,0);
\draw[thick, blue] (11.5,0) to[out=45,in=135] (12,0);
\draw[thick, blue] (12,0) to[out=45,in=135] (12.5,0);

\draw[thick, red] (9,0) to[out=-45,in=-135] (9.5,0);
\draw[thick, red] (9.5,0) to[out=-45,in=-135] (10,0);
\draw[thick, red] (9.5,0) to[out=-45,in=-135] (11,0);
\draw[thick, red] (9.5,0) to[out=-45,in=-135] (11.5,0);
\draw[thick, red] (10,0) to[out=-45,in=-135] (10.5,0);
\draw[thick, blue] (9,0) to[out=-45,in=-135] (12,0);

\node at (9,.14) {$\scalebox{0.5}{-}$};
\node at (9.5,.14) {$\scalebox{0.5}{+}$};
\node at (10,.14) {$\scalebox{0.5}{-}$};
\node at (10.5,.14) {$\scalebox{0.5}{+}$};
\node at (11,.14) {$\scalebox{0.5}{-}$};
\node at (11.5,.14) {$\scalebox{0.5}{-}$};
\node at (12,.14) {$\scalebox{0.5}{+}$};
\node at (12.5,.14) {$\scalebox{0.5}{-}$};


\draw[thick] (13,0) to[out=135,in=15] (12.5,0);
\draw[thick] (13,.25) to[out=135,in=60] (12.5,0);
\draw[thick] (13,-.25) to[out=135,in=-60] (12.5,0);
\draw[thick] (2.5,.25) to[out=-45,in=135] (3,0);
\draw[thick] (12,-.25) to[out=45,in=-135] (12.5,0);

\node[red] at (2.8,.22) {$\scalebox{0.5}{+}$};

%
%
%


\end{tikzpicture}\]
whereas for the second subcase we use 
\[\begin{tikzpicture}[x=.9cm, y=.9cm,
    every edge/.style={
        draw,
      postaction={decorate,
                    decoration={markings}
                   }
        }
]

\draw[thick] (1.5,.25) to[out=15,in=120] (2,0);
\draw[thick] (1.5,-.25) to[out=15,in=-120] (2,0);

\node at (2,.14) {$\scalebox{0.5}{+}$};
\node at (2,-.15) {$\scalebox{0.5}{v}$};


\draw[thick, blue] (2,0) to[out=45,in=135] (2.5,0);

\draw[thick, blue] (2,0) to[out=-45,in=-135] (2.5,0);
\draw[thick, blue] (2.5,0) to[out=-45,in=-135] (3,0);

\node at (2,.14) {$\scalebox{0.5}{+}$};
\node at (2.5,.14) {$\scalebox{0.5}{-}$};


\draw[thick, red] (3,0) to[out=45,in=135] (3.5,0);
\draw[thick, red] (3.5,0) to[out=45,in=135] (4,0);
\draw[thick, red] (3,0) to[out=45,in=135] (4.5,0);
\draw[thick, red] (4.5,0) to[out=45,in=135] (5,0);
\draw[thick, red] (4.5,0) to[out=45,in=135] (5.5,0);
\draw[thick, blue] (5.5,0) to[out=45,in=135] (6,0);

\draw[thick, red] (3,0) to[out=-45,in=-135] (3.5,0);
\draw[thick, red] (3.5,0) to[out=-45,in=-135] (4,0);
\draw[thick, red] (3.5,0) to[out=-45,in=-135] (5,0);
\draw[thick, red] (3.5,0) to[out=-45,in=-135] (5.5,0);
\draw[thick, red] (4,0) to[out=-45,in=-135] (4.5,0);
\draw[thick, blue] (3,0) to[out=-45,in=-135] (6,0);

\node at (3,.14) {$\scalebox{0.5}{+}$};
\node at (3.5,.14) {$\scalebox{0.5}{-}$};
\node at (4,.14) {$\scalebox{0.5}{+}$};
\node at (4.5,.14) {$\scalebox{0.5}{-}$};
\node at (5,.14) {$\scalebox{0.5}{+}$};
\node at (5.5,.14) {$\scalebox{0.5}{+}$};
\node at (6,.14) {$\scalebox{0.5}{-}$};


\draw[thick, red] (6,0) to[out=45,in=135] (6.5,0);
\draw[thick, red] (6.5,0) to[out=45,in=135] (7,0);
\draw[thick, red] (6,0) to[out=45,in=135] (7.5,0);
\draw[thick, red] (7.5,0) to[out=45,in=135] (8,0);
\draw[thick, red] (7.5,0) to[out=45,in=135] (8.5,0);
\draw[thick, blue] (8.5,0) to[out=45,in=135] (9,0);

\draw[thick, red] (6,0) to[out=-45,in=-135] (6.5,0);
\draw[thick, red] (6.5,0) to[out=-45,in=-135] (7,0);
\draw[thick, red] (6.5,0) to[out=-45,in=-135] (8,0);
\draw[thick, red] (6.5,0) to[out=-45,in=-135] (8.5,0);
\draw[thick, red] (7,0) to[out=-45,in=-135] (7.5,0);
\draw[thick, blue] (6,0) to[out=-45,in=-135] (9,0);

\node at (6.5,.14) {$\scalebox{0.5}{+}$};
\node at (7,.14) {$\scalebox{0.5}{-}$};
\node at (7.5,.14) {$\scalebox{0.5}{+}$};
\node at (8,.14) {$\scalebox{0.5}{-}$};
\node at (8.5,.14) {$\scalebox{0.5}{-}$};
\node at (9,.14) {$\scalebox{0.5}{+}$};


\draw[thick, red] (9,0) to[out=45,in=135] (9.5,0);
\draw[thick, red] (9.5,0) to[out=45,in=135] (10,0);
\draw[thick, red] (9,0) to[out=45,in=135] (10.5,0);
\draw[thick, red] (10.5,0) to[out=45,in=135] (11,0);
\draw[thick, red] (10.5,0) to[out=45,in=135] (11.5,0);
\draw[thick, blue] (11.5,0) to[out=45,in=135] (12,0);
\draw[thick, blue] (12,0) to[out=45,in=135] (12.5,0);

\draw[thick, red] (9,0) to[out=-45,in=-135] (9.5,0);
\draw[thick, red] (9.5,0) to[out=-45,in=-135] (10,0);
\draw[thick, red] (9.5,0) to[out=-45,in=-135] (11,0);
\draw[thick, red] (9.5,0) to[out=-45,in=-135] (11.5,0);
\draw[thick, red] (10,0) to[out=-45,in=-135] (10.5,0);
\draw[thick, blue] (9,0) to[out=-45,in=-135] (12,0);

\node at (9,.14) {$\scalebox{0.5}{+}$};
\node at (9.5,.14) {$\scalebox{0.5}{-}$};
\node at (10,.14) {$\scalebox{0.5}{+}$};
\node at (10.5,.14) {$\scalebox{0.5}{-}$};
\node at (11,.14) {$\scalebox{0.5}{+}$};
\node at (11.5,.14) {$\scalebox{0.5}{+}$};
\node at (12,.14) {$\scalebox{0.5}{-}$};
\node at (12.5,.14) {$\scalebox{0.5}{+}$};


\draw[thick] (13,0) to[out=135,in=15] (12.5,0);
\draw[thick] (13,.25) to[out=135,in=60] (12.5,0);
\draw[thick] (13,-.25) to[out=135,in=-60] (12.5,0);
\draw[thick] (2.5,.25) to[out=-45,in=135] (3,0);
\draw[thick] (12,-.25) to[out=45,in=-135] (12.5,0);

\node[red] at (2.8,.22) {$\scalebox{0.5}{-}$};

%
%
%


\end{tikzpicture}
\]
The substitution of the first subcase does not affect the associated knot/link because one may apply the Lemma 
  four times to the red subgraphs, then four times a type I move and finally twice the type IIa move. 
For the other substitution one may erase a pair of parallel edges, 
 apply the Lemma 
  three times to the  red subgraphs, four times a type I move,   
and finally use twice the type IIa move.

\end{proof}
Now that we have an Alexander Theorem for $\vec F$ at our disposal,
to reinforce the analogy with the braid groups it would be important to obtain an analogue of Markov Theorem in this framework. 
We plan to return to this problem in the future.

\section*{Acknowledgements}
The author is grateful   to
Simone Del Vecchio for his valuable comments on the first draft of this paper 
and 
to the Swiss National Science Foundation for the support. 
Finally, 
the author would like to thank the referee 
for his or her particularly attentive perusal of the manuscript, which resulted in many improvements of the paper.

\end{document}